\documentclass{amsart}
\usepackage{amscd}
\usepackage{amssymb}
\usepackage[pagebackref]{hyperref}

\newcommand{\Z}{{\mathbb Z}}
\newcommand{\Q}{{\mathbb Q}}

\newcommand{\fa}{\mathfrak a}

\newcommand{\fm}{\mathfrak m}
\newcommand{\fl}{\mathfrak l}
\newcommand{\fp}{\mathfrak p}

\newcommand{\fr}{\mathfrak r}

\newcommand{\fq}{\mathfrak q}

\newcommand{\cO}{{\mathcal O}}

\newcommand{\Cl}{\operatorname{Cl}}
\newcommand{\eps}{\varepsilon}

\newcommand{\Llra}{\Longleftrightarrow}

\newcounter{Tc}
\newcounter{Pc}
\newcounter{Lc}

\newtheorem{thm}[Tc]{Theorem}
\newtheorem{prop}[Pc]{Proposition}
\newtheorem{lem}[Lc]{Lemma}
\newtheorem{cor}{Corollary}

\title{A Supplement to Scholz's Reciprocity Law}
\author{Franz Lemmermeyer}
\address{M\"orikeweg 1 \\
 73489 Jagstzell \\ Germany}
\email{hb3@ix.urz.uni-heidelberg.de}

\subjclass{Primary 11 R 21; Secondary 11 R 29, 11 R 18}
\begin{document}
\baselineskip=17pt

\begin{abstract}
In this note we will present a supplement to 
Scholz's reciprocity law and discuss applications
to the structure of $2$-class groups of quadratic
number fields.
\end{abstract}

\maketitle

\section{Introduction}

Let us start by fixing some notation:
\begin{itemize}
\item $p$ and $q$ denote primes $\equiv 1 \bmod 4$;
\item $h(d)$ denotes the class number (in the usual sense) 
      of the quadratic number field with discriminant $d$;
\item $\cO_p$ and $\cO_q$ denote the rings of integers in
      $\Q(\sqrt{p}\,)$ and $\Q(\sqrt{q}\,)$;
\item $\eps_p$ and $\eps_q$ denote the fundamental units
      of $\Q(\sqrt{p}\,)$ and $\Q(\sqrt{q}\,)$, respectively;
\item $[\alpha/\fp]$ denotes the quadratic residue symbol in 
      a quadratic number field; recall that it takes values 
      $\pm 1$ and is defined for ideals $\fp \nmid 2\alpha$ 
      by $[\alpha/\fp] \equiv \alpha^{(N\fp-1)/2} \bmod \fp$.
\end{itemize}
Given primes $p \equiv q \equiv 1 \bmod 4$ with $(p/q) = +1$,
we have $p\cO_q = \fp\fp'$ and $q\cO_p = \fq\fq'$; the symbol 
$[\eps_p/\fq]$ does not depend on the choice of $\fq$, so we
can simply denote it by $(\eps_p/q)$. Scholz's reciprocity law 
then says that we always have $(\eps_p/q) = (\eps_q/p)$ (for 
details, see \cite{LRR1, LRR2,LRL}). Scholz's reciprocity law 
was first proved by Sch\"onemann \cite{Schnm}, and then 
rediscovered by Scholz \cite{Sch1} (Scholz mentioned his
reciprocity law and the connection to the parity of the class
number of $\Q(\sqrt{p}, \sqrt{q}\,)$ in a letter to Hasse from 
Aug. 25, 1928; see \cite{LSH}). In \cite{Sch2}, Scholz found 
that in fact $(\eps_p/q) = (\eps_q/p) = (p/q)_4 (q/p)_4$,
and showed that these residue symbols are connected to the 
structure of the $2$-class group of $\Q(\sqrt{pq}\,)$.

\section{Hilbert's Supplementary Laws}
For extending these results we have to recall the notions of
primary and hyper-primary integers (see Hecke \cite{Hecke}).

\begin{lem}\label{Lpr}
Let $K$ be a number field with ring of integers $\cO_K$, and let
$\alpha \in \cO_K$ be an element with odd norm. Then the following
assertions are equivalent:
\begin{enumerate}
\item $\alpha \gg 0$ is totally positive and 
      $\alpha \equiv \xi^2 \bmod 4$ for some $\xi \in \cO_K$;
\item the extension $K(\sqrt{\alpha}\,)/K$ is unramified at all
      primes above $2 \infty$.
\end{enumerate}
\end{lem}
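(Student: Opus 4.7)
The plan is to treat the archimedean and the dyadic primes separately, and then to glue the dyadic local congruences via the Chinese Remainder Theorem.

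For the infinite primes, at each real embedding $\sigma\colon K \hra \R$ the extension $K(\sqrt{\alpha})/K$ ramifies at the place above $\sigma$ exactly when $\sigma(\alpha) < 0$, while complex places are automatically unramified in any quadratic extension. Hence unramifiedness at $\infty$ is equivalent to $\alpha \gg 0$.

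For the dyadic primes, the hypothesis on the norm guarantees that $\alpha$ is a unit in every completion $\cO_\fp$ with $\fp \mid 2$, and I would then prove the local criterion
\[ K_\fp(\sqrt{\alpha})/K_\fp \text{ is unramified} \Llra \alpha \equiv \xi_\fp^2 \bmod 4\cO_\fp \text{ for some } \xi_\fp \in \cO_\fp. \]
For ``$\Leftarrow$'', one divides by the unit $\xi_\fp^2$ to write $\alpha/\xi_\fp^2 = 1 + 4\delta$, and sets $\eta = (\sqrt{\alpha}/\xi_\fp - 1)/2$; then $\eta^2 + \eta - \delta = 0$, a polynomial of unit discriminant $1 + 4\delta$, so $\cO_\fp[\eta]$ is already a maximal order and $K_\fp(\eta)/K_\fp$ is unramified. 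For ``$\Rightarrow$'', one uses that the unramified quadratic extension of $K_\fp$ in residue characteristic~$2$ is generated by an Artin--Schreier root of $X^2 + X - c$, so $K_\fp(\sqrt{\alpha}) = K_\fp(\sqrt{1 + 4c})$ and $\alpha = \beta^2(1 + 4c)$ for some $\beta \in K_\fp^\times$; since both $\alpha$ and $1+4c$ are units, $\beta \in \cO_\fp^\times$, and $\alpha \equiv \beta^2 \bmod 4\cO_\fp$.

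To glue, I would invoke the Chinese Remainder Theorem to produce $\xi \in \cO_K$ with $\xi \equiv \xi_\fp \bmod 2\cO_\fp$ at every $\fp \mid 2$; writing $\xi - \xi_\fp = 2\gamma_\fp$ gives $\xi^2 - \xi_\fp^2 = 4\gamma_\fp(\xi_\fp + \gamma_\fp) \in 4\cO_\fp$, so $\xi^2 \equiv \alpha \bmod 4\cO_\fp$ for every dyadic $\fp$, and hence $\xi^2 \equiv \alpha \bmod 4\cO_K$. I expect the main obstacle to be the ``$\Rightarrow$'' direction at the dyadic primes: in residue characteristic~$2$ the unique unramified quadratic extension is not generated by $\sqrt{c}$ for a unit $c$ but by an Artin--Schreier generator, and some care is needed to translate this back into a congruence modulo~$4$ on the $\alpha$ side.
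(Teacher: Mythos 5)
Your proof is correct. The paper itself gives no argument for this lemma --- it is stated as a recollection from Hecke's book --- so there is nothing to compare against line by line; your local--global treatment is the standard one and it is complete. The two points that usually cause trouble are both handled properly: in the dyadic ``$\Leftarrow$'' direction the substitution $\eta = (\sqrt{\alpha}/\xi_\fp - 1)/2$ produces the Artin--Schreier equation $\eta^2+\eta-\delta=0$ of unit discriminant, which covers the split/trivial case as well as the inert one (so the lemma remains valid when $\alpha$ is a square, as the paper notes it must); and in the ``$\Rightarrow$'' direction you correctly pass through the Artin--Schreier description of the unramified quadratic extension in residue characteristic $2$ rather than trying to realize it as $K_\fp(\sqrt{u})$ for a unit $u$. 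One small point worth making explicit: when you divide by $\xi_\fp^2$ you are using that $\xi_\fp$ is a $\fp$-adic unit, which follows because $\alpha$ has odd norm and $\xi_\fp^2\equiv\alpha\bmod 4\cO_\fp$; similarly, the gluing step via weak approximation needs only that $\xi\equiv\xi_\fp \bmod 2\cO_\fp$ at each $\fp\mid 2$, exactly as you wrote.
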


If the conditions of Lemma \ref{Lpr} are satisfied, we say that 
$\alpha$ is primary.

\begin{lem}\label{Lhp}
Assume that $2\cO_K = \fl_1^{e_1} \cdots \fl_r^{e_r}$; then 
the following assertions are equivalent:
\begin{enumerate}
\item $\alpha$ is primary, and 
      $\alpha \equiv \xi^2 \bmod \fl_j^{2e_j+1}$ for all $j$.
\item every prime above $2$ splits in the extension $K(\sqrt{\alpha}\,)/K$.
\end{enumerate}
\end{lem}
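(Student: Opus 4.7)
The content of the lemma is purely local at the primes above $2$. Indeed, by the standard correspondence, for any prime $\fl$ of $K$ with $\fl \nmid \alpha$, the prime $\fl$ splits in $K(\sqrt{\alpha}\,)/K$ if and only if $\alpha$ is a square in the completion $K_\fl$. Granting this, and taking the ``primary'' hypothesis in (1) to supply the ``at $\infty$'' half of (2) (as in Lemma~\ref{Lpr}), the lemma reduces to a single local assertion at each $\fl = \fl_j$ with ramification index $e = e_j$: for $\alpha \in \cO_{K,\fl}^\times$,
\[
\alpha \in (K_\fl^\times)^2 \iff \alpha \equiv \xi^2 \bmod \fl^{2e+1}.
\]
The weaker congruence mod $\fl^{2e}$ (namely $\alpha \equiv \xi^2 \bmod 4$) from Lemma~\ref{Lpr} corresponds only to unramifiedness at $\fl$; gaining one extra power of $\fl$ is what should promote ``unramified'' to ``split''.

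The direction ``$\Rightarrow$'' is immediate by approximating a square root of $\alpha$ in $K_\fl$ by some $\xi \in \cO_K$. For ``$\Leftarrow$'' I would fix a uniformizer $\pi$ of $\fl$, write $2 = u\pi^e$ with $u \in \cO_{K,\fl}^\times$, and (after dividing by $\xi^2$) reduce to showing that $1 + \pi^{2e+1}\gamma$ is a square in $K_\fl$ for every $\gamma \in \cO_{K,\fl}$. Trying a square root of the shape $\eta = 1 + \pi^{e+1}\delta$ gives
\[
\eta^2 = 1 + 2\pi^{e+1}\delta + \pi^{2e+2}\delta^2 = 1 + \pi^{2e+1}(u\delta + \pi\delta^2),
\]
so one is left solving $u\delta + \pi\delta^2 = \gamma$ for $\delta$. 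Since $u$ is a unit, the derivative $u + 2\pi\delta$ is a unit at $\delta = 0$, and the ordinary Hensel's lemma produces the desired $\delta$.

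Assembling: (1) $\Rightarrow$ (2) follows by applying the criterion at each $\fl_j \mid 2$, together with positivity at the real places; (2) $\Rightarrow$ (1) reverses this, the local square at each $\fl_j$ yielding both the congruence $\alpha \equiv \xi^2 \bmod 4$ (the ``primary at $2$'' half) and the refined congruence modulo $\fl_j^{2e_j+1}$. The principal technical obstacle is the Hensel computation in residue characteristic~$2$: the substitution $\eta = 1 + \pi^{e+1}\delta$ must be calibrated so that both the linear and the quadratic terms land at valuation exactly $2e+1$, with a unit factor $u$ appearing in the linear term. This bookkeeping is precisely what singles out the exponent $2e+1$ in condition (1).
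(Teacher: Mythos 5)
The paper states this lemma without any proof (the notions of primary and hyper-primary are simply imported from Hecke), so there is no argument in the text to compare yours against; judged on its own terms, your proof follows the standard local route and is essentially sound. The reduction to a single local assertion at each $\fl_j$ via ``$\fl$ splits in $K(\sqrt{\alpha}\,)/K$ iff $\alpha \in (K_\fl^\times)^2$'' is correct, and the substitution $\eta = 1 + \pi^{e+1}\delta$ together with $2 = u\pi^e$ is exactly the bookkeeping that singles out the exponent $2e+1$. One step needs tightening: for $f(\delta) = \pi\delta^2 + u\delta - \gamma$ you have $f(0) = -\gamma$, which need not lie in $\fl$, so the Newton/Hensel hypothesis $|f(\delta_0)| < |f'(\delta_0)|^2$ fails at $\delta_0 = 0$. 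Either start from $\delta_0 \equiv \gamma u^{-1} \bmod \fl$, or invoke the simple-root form of Hensel's lemma, noting that the reduction of $f$ modulo $\fl$ is the linear polynomial $\bar{u}\,\delta - \bar{\gamma}$ with unit coefficient. This is a one-line repair, not a defect of the method.

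The more substantive loose end is the direction (2) $\Rightarrow$ (1). Condition (1) asserts that $\alpha$ is primary, hence totally positive by Lemma \ref{Lpr}, whereas condition (2) as printed speaks only of the primes above $2$. Splitting at those primes yields $\alpha \equiv \xi^2 \bmod \fl_j^{2e_j+1}$ for each $j$, hence the congruence modulo $4$, but it says nothing about the real places: for $K = \Q$ and $\alpha = -7$ the prime $2$ splits in $\Q(\sqrt{-7}\,)/\Q$, yet $-7$ is not primary. Your parenthetical about the ``at $\infty$'' half shows you noticed the mismatch, but your concluding paragraph then claims only the ``primary at $2$'' half and leaves total positivity unaccounted for. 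The honest resolution, consistent with the phrasing ``primes above $2\infty$'' in Lemma \ref{Lpr}, is to read (2) as a condition at the places above $2\infty$ (or to carry total positivity as a standing hypothesis) and to say so explicitly; as literally stated, the implication (2) $\Rightarrow$ (1) is not quite proved --- nor, indeed, quite true.
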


If the conditions of Lemma \ref{Lhp} are satisfied, we say that 
$\alpha$ is hyper-primary. Observe that the conditions in (1) are
equivalent to $\alpha \equiv \xi^2 \bmod 4\fl_1 \cdots \fl_r$.
Also note $\alpha$ is allowed to be a 
square in Lemma \ref{Lpr} and Lemma \ref{Lhp}.

Our next result is related to the First Supplementary Law of 
quadratic reciprocity for fields with odd class number; it was
stated and proved in a special case by Hilbert  (\cite{Hil}),
and proved in full generality by Furtw\"angler. Nowadays, this 
result is almost forgotten; for a proof of Hilbert's Supplementary 
Laws (for arbitrary number fields) based on class field theory, see 
\cite{LSRL}; Hecke \cite[Thm 171]{Hecke} gives a proof based on his 
theory of Gauss sums and theta functions over algebraic number fields.

\begin{thm}[Hilbert's First Supplementary Law]
Let $\fa$ be an ideal of odd norm in some number field $k$ with 
odd class number $h$, and let $(\cdot/\cdot)$ denote the quadratic
residue symbol in $\cO_k$. Then the following assertions are equivalent:
\begin{enumerate}
\item $(\eps/\fa) = +1$ for all units $\eps \in \cO_k^\times$;
\item $\fa^h = (\alpha)$ for some primary $\alpha \in \cO_k$.
\end{enumerate}
\end{thm}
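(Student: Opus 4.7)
The plan is to apply Hilbert reciprocity for $(2)\Rightarrow(1)$ and class field theory via the ray class field of conductor $4\infty$ for $(1)\Rightarrow(2)$.

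For $(2)\Rightarrow(1)$, given $\fa^h = (\alpha)$ with $\alpha$ primary and any unit $\eps\in\cO_k^\times$, I expand
\[
  (\eps/\fa)^h \;=\; (\eps/(\alpha)) \;=\; \prod_{\fp\nmid 2\infty}(\eps/\fp)^{v_\fp(\alpha)} \;=\; \prod_{\fp\text{ odd}}(\eps,\alpha)_\fp,
\]
the last step because $\eps$ is a unit at every $\fp$ of odd residue characteristic. Hilbert reciprocity rewrites this as $\prod_{v\in S}(\eps,\alpha)_v^{-1}$ with $S=\{v\mid 2\infty\}$, and Lemma~\ref{Lpr} kills each local factor: at $v\mid 2$ the unramified extension $k_v(\sqrt\alpha)/k_v$ makes the unit $\eps$ a local norm, and at real $v$ the total positivity of $\alpha$ does the same. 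Hence $(\eps/\fa)^h = 1$, and oddness of $h$ forces $(\eps/\fa) = 1$.

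For $(1)\Rightarrow(2)$, write $\fa^h = (\beta)$. The existence of a unit $\eta$ with $\eta\beta$ primary is equivalent to $[\fa^h]$ being a square in the ray class group $\Cl_{4\infty}$ of conductor $4\infty$: if $\eta\beta = \xi^2\rho$ with $\rho$ totally positive and $\rho\equiv 1\bmod 4\cO_k$, then $[\fa^h] = [(\xi)]^2$; conversely, $[\fa^h] = [\mathfrak{c}]^2$ projects to $[\mathfrak{c}]^2 = 0$ in $\Cl_k$, so oddness of $h$ (killing the $2$-torsion of $\Cl_k$) makes $\mathfrak{c}$ principal, say $\mathfrak{c} = (\delta)$, whereupon $\fa^h = (\delta^2\rho)$ for some totally positive $\rho\equiv 1\bmod 4\cO_k$, and $\delta^2\rho$ is primary. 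Since $G := \Cl_{4\infty}/\Cl_{4\infty}^2$ is elementary abelian and $h$ is odd, $[\fa^h] = [\fa]$ in $G$, and the task becomes: show $[\fa] = 0$ in $G$.

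Artin reciprocity identifies $G$ with $\Gal(M'/k)$, where $M'$ is the maximal elementary abelian $2$-extension of $k$ inside the ray class field of conductor $4\infty$; by Kummer theory $M' = k(\sqrt{V'})$ with $V' = \{[\gamma]\in k^\times/k^{\times 2} : k(\sqrt\gamma)/k \text{ has conductor dividing } 4\infty\}$. The key identification, and the heart of the proof, is $V' = \cO_k^\times/\cO_k^{\times 2}$: at any $\fl\mid 2$ the conductor of $k_\fl(\sqrt\gamma)/k_\fl$ divides $\fl^{2e_\fl}$ exactly when $v_\fl(\gamma)$ is even, forcing $(\gamma) = \mathfrak{c}^2$; oddness of $h$ makes $\mathfrak{c}$ principal, so $\gamma = u\delta^2$ for a unit $u$ and $[\gamma] = [u]$ in $k^\times/k^{\times 2}$. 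Hence $\fa$ splits in $M'$ iff $(\eps/\fa) = 1$ for every $\eps\in\cO_k^\times$ -- which is the hypothesis -- and this gives $[\fa] = 0$ in $G$. The principal technical obstacle is the local conductor computation establishing $V'\subseteq\cO_k^\times/\cO_k^{\times 2}$ at primes above $2$; with that in hand, the reciprocity-theoretic translation is clean, and the odd class number is used twice -- once to principalize $\mathfrak{c}$ and once to equate $[\fa^h]$ with $[\fa]$ in the elementary abelian $G$.
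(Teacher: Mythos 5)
Your argument is correct, but note that the paper does not actually prove this theorem: it is quoted as a classical result of Hilbert--Furtw\"angler, with pointers to \cite{LSRL} for a class-field-theoretic proof and to Hecke \cite{Hecke} for one via Gauss sums, so there is no in-paper proof to match. What you have supplied is a self-contained argument in the spirit of those references, and it is close in flavour to what the paper itself does for Theorem \ref{T1}, where a residue condition is matched against a splitting condition in a suitable ray class field. Your direction $(2)\Rightarrow(1)$ via the product formula for the quadratic Hilbert symbol is clean: the tame formula $(\eps,\alpha)_\fp=(\eps/\fp)^{v_\fp(\alpha)}$ converts the residue symbol into local symbols, and Lemma \ref{Lpr}(2) is exactly what kills the factors at $2\infty$ ($\eps$ is a local norm from an unramified extension at $\fl \mid 2$, and $\alpha$ is positive at every real place). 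Your direction $(1)\Rightarrow(2)$ correctly reduces the existence of a primary generator of $\fa^h$ to the triviality of $[\fa]$ in $\Cl_{4\infty}/\Cl_{4\infty}^2$, and the identification of the Kummer radical of the corresponding genus-type field with $\cO_k^\times/\cO_k^{\times 2}$ is the right key step; the local conductor computation you flag (conductor $\fl^{2e_\fl+1}$ when $v_\fl(\gamma)$ is odd, computed from the different of an Eisenstein generator) is standard and correct, and the odd class number enters exactly at the two places you indicate. The only points to tighten in a written version are (i) the meaning of $(\eps/\fa)$ and of ``$\fa$ splits in $M'$'' for composite $\fa$ (both are the multiplicative extensions of the prime case, matching the Artin symbol of $\fa$), and (ii) the small verification that $\delta^2\rho$ is an algebraic integer congruent to a square modulo $4\cO_k$, which works because $\delta$ and $\xi$ have odd norm; neither is a genuine gap.
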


Hilbert calls an ideal $\fa$ with odd norm {\em primary} if 
condition (1) above is satisfied, i.e., if $(\eps/\fa) = +1$
for all units $\eps$ in $k$. Hilbert's Second Supplementary Law 
can be given the following form:

\begin{thm}[Hilbert's Second Supplementary Law]\label{T2nd}
Let $\fa$ be a primary ideal of odd norm in some number field $k$ 
with odd class number $h$. Then the following assertions are equivalent:
\begin{enumerate}
\item $(\lambda/\fa) = +1$ for all $\lambda \in \cO_k$
      whose prime divisors consist only of primes above $2$;
\item $\fa^h = (\alpha)$ for some primary $\alpha \in \cO_k$.
\end{enumerate}
\end{thm}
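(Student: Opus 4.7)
The plan is to argue by class field theory, in close parallel to the proof of Hilbert's First Supplementary Law. I read condition~(2) of Theorem~\ref{T2nd} as asserting the existence of a \emph{hyper-primary} generator $\alpha$ of $\fa^h$, since under the standing hypothesis that $\fa$ is primary and $h$ is odd the First Supplementary Law already supplies a primary generator, and the classical Second Law controls precisely the jump from primary to hyper-primary. The central device is Hilbert's reciprocity, i.e.\ the product formula $\prod_v (\lambda,\alpha)_v = +1$ for quadratic Hilbert symbols. Writing $\fa^h=(\alpha)$ with $\alpha$ primary, oddness of $h$ gives $(\lambda/\fa)=(\lambda/\alpha)$; applying the product formula to a $2$-primary $\lambda$---the tame primes $\fp\nmid 2$ collapse into $(\lambda/\alpha)$ because $v_\fp(\lambda)=0$, and archimedean factors vanish because $\alpha\gg 0$---yields the pivotal identity
\[
 (\lambda/\fa)\;=\;\prod_{\fl\mid 2}(\lambda,\alpha)_\fl.
\]

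The implication (2)$\Rightarrow$(1) is then immediate: if $\alpha$ is hyper-primary, Lemma~\ref{Lhp} says $\alpha$ is a local square at every $\fl\mid 2$, so every factor on the right is trivial and $(\lambda/\fa)=+1$ for every $2$-primary $\lambda$. For the converse (1)$\Rightarrow$(2), condition (1) together with the primary condition $(\eps/\fa)=+1$ for units gives $\prod_{\fl\mid 2}(\mu,\alpha)_\fl=+1$ for every $\mu\in k^\times$ whose principal ideal is supported on primes above $2$. The class of $\alpha$ in $\prod_{\fl\mid 2}k_\fl^\times/(k_\fl^\times)^2$ is therefore orthogonal, under the sum of local Hilbert pairings, to the image $W$ of these $\mu$. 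One then shows, using the oddness of $h$ (so that every ideal $\prod_\fl\fl^{n_\fl}$ becomes principal after raising to the odd power $h$, and consequently every parity vector of local valuations is realised by some $2$-primary global $\mu$) together with a weak-approximation argument for the global unit contribution, that $W$ is large enough that the only element of $\prod_{\fl\mid 2}k_\fl^\times/(k_\fl^\times)^2$ orthogonal to it is trivial. Hence $\alpha$ becomes a local square at each $\fl\mid 2$; a weak-approximation choice of $\xi\in\cO_k$ with $\xi\equiv\sqrt{\alpha}\bmod\fl^{2e_\fl+1}$ at every $\fl\mid 2$ then realises the congruence appearing in Lemma~\ref{Lhp}(1), exhibiting $\alpha$ as hyper-primary without having to alter it.

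The main obstacle is precisely the separation step in (1)$\Rightarrow$(2): converting the single global product identity $\prod_{\fl\mid 2}(\mu,\alpha)_\fl=+1$ into the individual local vanishings $(\,\cdot\,,\alpha)_\fl=+1$ needed to conclude that $\alpha$ is a local square at each $\fl$. This is where the full strength of the two hypotheses---$\fa$ primary and $h$ odd---is used, and it is the analogue for the Second Law of the class-field-theoretic separation step in Hilbert's and Furtw\"angler's proofs of the First Law.
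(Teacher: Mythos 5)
First, for context: the paper does not prove Theorem \ref{T2nd} at all --- it only cites Hecke \cite[Thm 175]{Hecke} (and, for the special case needed later, runs a ray-class-field argument in the proof of Theorem \ref{T1}: identify $F(\sqrt{\alpha}\,)$ as the unique quadratic subextension of a ray class field modulo a power of $2$ and compare the Kummer-theoretic and class-field-theoretic decomposition laws). So there is no in-paper proof to measure you against. Your reading of condition (2) as ``\emph{hyper}-primary'' is certainly the intended one (as literally printed, (2) coincides with condition (2) of the First Law and would be automatic under the standing hypothesis that $\fa$ is primary), and your product-formula setup is sound: the identity $(\lambda/\fa)=\prod_{\fl\mid 2}(\lambda,\alpha)_\fl$ is correct (tame symbols collapse because $v_\fp(\lambda)=0$ for $\fp\nmid 2$, archimedean symbols vanish because $\alpha\gg 0$, and oddness of $h$ lets you replace $\fa^h$ by $\fa$), and (2)$\Rightarrow$(1) follows at once from Lemma \ref{Lhp} plus Hensel.

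The gap is in the step you yourself flag as the main obstacle, and it is not merely unproved but misstated. You claim that $W$, the image of the $2$-primary elements and units in $V=\prod_{\fl\mid 2}k_\fl^\times/(k_\fl^\times)^2$, has trivial orthogonal complement. That is false: a dimension count over $\mathbb{F}_2$ gives $\dim V=2s+n$ ($s$ the number of dyadic primes, $n=[k:\Q]$), while the $S$-units modulo squares have dimension only $s+r_1+r_2<\dim V$, so $W^\perp\neq 0$ since the pairing on $V$ is nondegenerate. Already for $k=\Q$ one has $W=\langle -1,2\rangle$ and $W^\perp=\langle 2\rangle\neq\{1\}$; the argument only closes because $\alpha$, having odd norm, is a \emph{unit} at every $\fl\mid 2$, and the classes of local units meet $W^\perp$ trivially. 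The correct separation statement is that the image of the $S$-units ($S$ the dyadic \emph{and} archimedean places) is a \emph{maximal isotropic} subspace of $\prod_{v\in S}k_v^\times/(k_v^\times)^2$ --- where injectivity of $E_S/E_S^2\to\prod_{v\in S}k_v^\times/(k_v^\times)^2$ is exactly where the odd class number enters, via the nonexistence of everywhere-unramified quadratic extensions --- so that $\alpha\perp W$ forces $\alpha$ to agree locally with an $S$-unit, after which the constraints $\alpha\gg 0$, $\alpha$ a dyadic unit, and $\alpha\equiv\xi^2\bmod 4$ must be used to force that $S$-unit class to be trivial. None of this appears in your write-up; as it stands, (1)$\Rightarrow$(2) rests on a false lemma. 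The plan is salvageable along these lines, but the decisive step is exactly the one left as ``one then shows.''
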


Hilbert calls ideals satisfying property (1) above {\em hyper-primary}.
A proof of a generalization of Thm. \ref{T2nd} to arbitrary number
fields can be found in \cite[Thm 175]{Hecke}. Now we can state

\begin{thm}\label{P1}
Let $p \equiv q \equiv 1 \bmod 4$ be primes with $(p/q) = +1$. 
Then $p\cO_q = \fp\fp'$ and $q\cO_p = \fq\fq'$ split. 
The class numbers $h(p)$ and $h(q)$ are odd, and there exist
elements $\pi \in \cO_q$ and $\rho \in \cO_p$ such that 
$\fp^{h(p)} = (\pi)$ and $\fq^{h(q)} = (\rho)$.

Then the following assertions are equivalent:
\begin{enumerate}
\item $(\eps_p/q) = +1$;
\item $\rho$ can be chosen primary;
\item $h(pq) \equiv 0 \bmod 4$.
\end{enumerate}
\end{thm}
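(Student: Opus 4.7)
The strategy is to derive (1)$\,\Leftrightarrow\,$(2) from Hilbert's First Supplementary Law, and to prove (2)$\,\Leftrightarrow\,$(3) by realizing the $4$-part of $\Cl(\Q(\sqrt{pq}\,))$ via the explicit extension $F(\sqrt\rho\,)$, where $F=\Q(\sqrt p,\sqrt q\,)$. For the preliminaries: genus theory applied to $\Q(\sqrt p\,)$ (a single ramified prime) gives $h(p)$ odd, and similarly $h(q)$; since $(p/q)=+1$ splits $q$ in $\cO_p$ as $\fq\fq'$, the class $[\fq]$ has order dividing the odd number $h(p)$, so $\fq^{h(p)}=(\rho)$ is principal.

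For (1)$\,\Leftrightarrow\,$(2), apply Hilbert's First Supplementary Law to $k=\Q(\sqrt p\,)$ and $\fa=\fq$. The unit group is $\cO_p^\times=\langle-1,\eps_p\rangle$, and $(-1/\fq)=+1$ since $N\fq=q\equiv 1\bmod 4$; hence $(\eta/\fq)=+1$ for every unit $\eta$ iff $(\eps_p/\fq)=(\eps_p/q)=+1$. The supplementary law makes this equivalent to $\fq^{h(p)}$ admitting a primary generator.

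For (2)$\,\Rightarrow\,$(3), set $K=\Q(\sqrt{pq}\,)$. Then $F$ is the genus field of $K$, so $F/K$ is unramified quadratic; since $\disc K=pq$ has two prime divisors, $\Cl(K)/\Cl(K)^2\cong\Z/2$ and the $2$-part of $\Cl(K)$ is cyclic, so $4\mid h(pq)$ iff $K$ admits an unramified cyclic quartic extension containing $F$. With $\rho$ primary I claim $L=F(\sqrt\rho\,)$ is such an extension. Primacy makes $\Q(\sqrt p\,)(\sqrt\rho\,)/\Q(\sqrt p\,)$ unramified above $2\infty$, which lifts to $L/F$; since $q$ ramifies in $F/\Q(\sqrt p\,)$ with $\fq\cO_F=\mathfrak{Q}^2$, the ideal $(\rho)\cO_F=\mathfrak{Q}^{2h(p)}$ is a square and $L/F$ is unramified at primes above $q$; and from $\rho\rho'=q^{h(p)}$ with $h(p)$ odd one gets $\sqrt{\rho'}=q^{(h(p)-1)/2}\sqrt q/\sqrt\rho\in L$, so $L/\Q$ is Galois. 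Writing $\sigma\tau$ for the nontrivial element of $\Gal(F/K)$ and extending by $\sigma\tau(\sqrt\rho)=\sqrt{\rho'}$, one computes $(\sigma\tau)^2(\sqrt\rho)=-\sqrt\rho$, so $\Gal(L/K)\cong\Z/4$ and $4\mid h(pq)$.

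For the converse (3)$\,\Rightarrow\,$(2), any unramified cyclic quartic $L\supset F\supset K$ equals $F(\sqrt\mu\,)$ by Kummer theory; invariance under $\Gal(F/K)=\langle\sigma\tau\rangle$ together with the cyclicity of $\Gal(L/K)$ forces $\mu$, modulo $F^{\times 2}$ and rational factors, to be fixed by $\tau$ and thus to lie in $\cO_p$. The ramification of $L/F$ at primes above $q$ identifies $(\mu)$ with $\fq^{h(p)}$, and unramification above $2\infty$ becomes primacy of $\mu$. This descent from a Kummer generator in $F$ to a primary element of $\cO_p$ is the main obstacle; the cleanest route combines the Artin isomorphism $\Gal(L/F)\cong\Cl(K)^2$ with Hilbert's supplementary laws to translate unramification into the primacy congruences $\mu\equiv\xi^2\bmod 4$ and $\mu\gg 0$. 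A secondary technical point is ensuring $\rho\notin F^{\times 2}$ in the forward direction so that $L\ne F$; this follows from the nontriviality of $[\mathfrak{Q}]^{h(p)}$ in $\Cl(F)$, i.e., from the fact that the relative $2$-part responsible for the order-$4$ extension is exactly the new piece not visible in $\Cl(\cO_p)$.
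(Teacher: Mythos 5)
Your treatment of (1) $\Leftrightarrow$ (2) coincides with the paper's: both reduce to Hilbert's First Supplementary Law applied to $k=\Q(\sqrt p\,)$ and $\fa=\fq$, and your observation that $(-1/\fq)=+1$ because $N\fq=q\equiv 1\bmod 4$ is exactly the right reduction to the single symbol $(\eps_p/q)$. For (2) $\Leftrightarrow$ (3) the paper merely cites Scholz, so your direct class-field-theoretic argument goes beyond what is written there. The forward direction (2) $\Rightarrow$ (3) is essentially complete and correct -- it is Scholz's construction of the unramified cyclic quartic extension $F(\sqrt\rho\,)/K$ -- with one flaw: your justification that $\rho\notin F^{\times 2}$ via ``nontriviality of $[\mathfrak Q]^{h(p)}$ in $\Cl(F)$'' does not work. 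Since $\mathfrak Q^{2h(p)}=(\rho)$, the class $[\mathfrak Q]^{h(p)}$ has order dividing $2$ and is trivial whenever $h(F)$ is odd; moreover its triviality would not force $\rho$ to be a square. The correct (and easier) argument is unique factorization of ideals: an element of $\Q(\sqrt p\,)^\times$ is a square in $F$ only if it lies in $\Q(\sqrt p\,)^{\times 2}\cup q\,\Q(\sqrt p\,)^{\times 2}$, and both cases contradict $v_\fq(\rho)=h(p)$ odd, $v_{\fq'}(\rho)=0$.

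The genuine gap is the converse (3) $\Rightarrow$ (2), which you need to close the cycle of implications. You assert that the unramified cyclic quartic $L\supset F$ is $F(\sqrt\mu\,)$ with $\mu$ descending, ``modulo $F^{\times 2}$ and rational factors,'' to a primary generator of $\fq^{h(p)}$, and then concede that this descent ``is the main obstacle,'' gesturing at a combination of the Artin map and Hilbert's supplementary laws. That is a description of a proof, not a proof. Concretely, what is missing is: (a) the Kummer-theoretic step that cyclicity of $\Gal(L/K)$ forces a generator $\mu$ with $\mu\cdot\sigma\tau(\mu)\in q\,F^{\times 2}$, hence, after adjusting by rational squares, $\mu\in\Q(\sqrt p\,)$; (b) the case analysis on $(\mu)=\fq^a{\fq'}^b\mathfrak c^2$ that rules out $a\equiv b\bmod 2$ (using that $h(p)$ is odd and that $F(\sqrt\eta\,)/K$ is not cyclic unramified for units $\eta$, e.g.\ because $\eps_p\eps_p'=-1$ is not a square in the real field $F$), which is what actually identifies $(\mu)$ with $\fq^{h(p)}$ up to squares and units; and (c) the remark that $2$ and $\infty$ are unramified and non-inert issues do not intervene in $F/\Q(\sqrt p\,)$, so that unramifiedness of $L/F$ at $2\infty$ really does descend to primariness of $\mu$ over $\Q(\sqrt p\,)$. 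If you do not want to carry this out, you can do what the paper does and cite Scholz for (1) $\Leftrightarrow$ (3), keeping your direct argument only for (2) $\Rightarrow$ (3); the equivalence then closes via (1) $\Leftrightarrow$ (2).
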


\begin{proof}
Genus theory (see e.g. \cite[Chap. 2]{LRL}) implies that 
$h(p) \equiv h(q) \equiv 1 \bmod 2$. 
The equivalence (1) $\Llra$ (2) is a special case of Hilbert's 
First Supplementary Law for fields with odd class number 
(\cite{Hil}); observe, however, that Hilbert stated and proved
this law only for a very narrow class of fields -- the general
statement was proved only by Furtw\"angler. The equivalence 
(1) $\Llra$ (3) is due to Scholz \cite{Sch2}. 

It is not hard to prove these statements directly using class
field theory; below we will do this in an analogous situation.
\end{proof}

Observe that part (3) of Thm. \ref{P1} is symmetric in $p$ and 
$q$, which immediately implies Scholz's reciprocity law 
$(\eps_p/q) = (\eps_q/p)$. Note that we can state this
reciprocity law in the following form:

\begin{cor}
Let $p$ and $q$ satisfy the assumptions of Thm. \ref{P1}. If the 
ideals above $q$ in $\Q(\sqrt{p}\,)$ are primary, then so are
the ideals above $p$ in $\Q(\sqrt{q}\,)$.
\end{cor}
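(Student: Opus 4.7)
The plan is to unpack Hilbert's notion of primary ideal and to reduce the claim to the symmetric characterization in Thm.~\ref{P1}.

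First I would observe that since $\cO_p^\times = \la -1,\eps_p\ra$, a prime $\fq$ of $\Q(\sqrt{p})$ above $q$ is primary in Hilbert's sense precisely when $(-1/\fq) = +1$ and $(\eps_p/\fq) = +1$. The first equality is automatic: $q \equiv 1 \bmod 4$ gives $(-1/\fq) \equiv (-1)^{(N\fq-1)/2} = (-1)^{(q-1)/2} = +1 \bmod \fq$. Hence ``the ideals above $q$ in $\Q(\sqrt{p})$ are primary'' is just a restatement of $(\eps_p/q) = +1$, and by symmetry ``the ideals above $p$ in $\Q(\sqrt{q})$ are primary'' amounts to $(\eps_q/p) = +1$.

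Next I would invoke the equivalence (1) $\Llra$ (3) of Thm.~\ref{P1}: this identifies $(\eps_p/q) = +1$ with the manifestly symmetric condition $h(pq) \equiv 0 \bmod 4$, and the same equivalence with $p$ and $q$ interchanged then yields $(\eps_q/p) = +1$. The hypothesis therefore implies the conclusion.

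The only place where one might anticipate trouble is the reduction of Hilbert's primary condition to the single symbol $(\eps_p/q)$; but this is a one-line computation powered by $q \equiv 1 \bmod 4$. In effect the corollary is a direct reformulation of Scholz's reciprocity law in the intrinsic ideal-theoretic language of Hilbert, so no step other than this bookkeeping remains once Thm.~\ref{P1} is granted.
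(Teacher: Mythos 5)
Your proposal is correct and follows the paper's own route: the paper likewise derives the corollary from the symmetry of condition (3) of Thm.~\ref{P1} ($h(pq) \equiv 0 \bmod 4$), after identifying ``the ideals above $q$ are primary'' in Hilbert's sense with $(\eps_p/q)=+1$. Your explicit check that $(-1/\fq)=+1$ forces no extra condition (because $N\fq = q \equiv 1 \bmod 4$) is a small piece of bookkeeping the paper leaves implicit, but the argument is the same.
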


In the next section we will prove an analogous result connected 
to Hilbert's Second Supplementary Law of Quadratic Reciprocity.

\section{A Supplement to Scholz's Reciprocity Law}.

Assume that $p \equiv q \equiv 1 \bmod 8$ are primes. Then 
$2$ splis in $\Q(\sqrt{p}\,)$ and $\Q(\sqrt{q}\,)$, and we
can write $2\cO_p = \fl\fl'$ and $2\cO_q = \fm\fm'$. Now
pick elements $\lambda_p, \lambda_q$ such that 
$\fl^{h(p)} = (\lambda_p)$ and $\fm^{h(q)} = (\lambda_q)$. 
Since both fields have units with independent signatures,
we may assume that $\lambda_p, \lambda_q \gg 0$. The
quadratic residue symbol $[\lambda_p/\fq]$, where
$q\cO_p = \fq\fq'$, does not depend on the choice of
$\lambda_p$ or $\fq$, so we may denote it by $(\lambda_p/q)$.

\begin{thm}\label{T1}
Let $p \equiv q \equiv 1 \bmod 8$ be primes with $(p/q) = +1$, 
and assume that $(\eps_p/q) = (\eps_q/p) = +1$. Then the following
assertions are equivalent:
\begin{enumerate}
\item $(\lambda_p/q) = +1$;
\item $\rho$ can be chosen hyper-primary;
\item the ideal classes generated by the ideals above $2$
      in $F = \Q(\sqrt{pq}\,)$ are fourth powers in $\Cl(F)$;
\end{enumerate}
\end{thm}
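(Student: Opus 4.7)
The plan is to deduce (1) $\Llra$ (2) from Hilbert's Second Supplementary Law (Thm \ref{T2nd}), and (2) $\Llra$ (3) from an explicit class field theoretic analysis, in the spirit promised at the end of the proof of Thm \ref{P1}. Throughout I may assume $\rho$ is already primary, since by Thm \ref{P1} and the standing hypothesis $(\eps_p/q) = +1$ such a choice exists.

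For (1) $\Llra$ (2), observe that the group of elements of $\cO_p$ supported only on primes above $2$ is generated, modulo units, by $2$, $\lambda_p$, and $\lambda_p'$. Since $q \equiv 1 \bmod 8$ we have $(2/q) = +1$; since $\rho$ is primary, $(\eps/\fq) = +1$ for every unit $\eps$; and Galois equivariance gives $(\lambda_p'/\fq) = (\lambda_p/\fq') = (\lambda_p/\fq)$. So condition (1) of Thm \ref{T2nd} applied to $\fa = \fq$ collapses to $(\lambda_p/q) = +1$, and Thm \ref{T2nd} delivers (1) $\Llra$ (2).

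For (2) $\Llra$ (3), set $K = F(\sqrt{p}\,) = \Q(\sqrt{p}, \sqrt{q}\,)$, the genus field of $F$, and let $L = K(\sqrt{\rho}\,)$. The first step is to verify that $L/F$ is an unramified cyclic extension of degree $4$: at $\fq$, which ramifies in $K/k$ to a prime $\mathfrak Q$, one has $v_{\mathfrak Q}(\rho) = 2h(p)$ even, so $L/K$ is unramified at $\mathfrak Q$; primarity of $\rho$ handles primes above $2$; and $\rho \gg 0$ keeps $L$ totally real. For cyclicity, the identity $\rho\rho' = q^{h(p)}$ (forced by total positivity of $\rho$) puts $\sqrt{\rho'} = \pm(\sqrt q\,)^{h(p)}/\sqrt{\rho}$ inside $L$, and any lift $\tilde\sigma \in \Gal(L/F)$ of the nontrivial $\sigma \in \Gal(K/F)$ (which sends $\sqrt p \mapsto -\sqrt p$ and $\sqrt q \mapsto -\sqrt q$) then satisfies $\tilde\sigma^2(\sqrt{\rho}\,) = (-1)^{h(p)}\sqrt{\rho} = -\sqrt{\rho}$ since $h(p)$ is odd, so $\tilde\sigma$ has order $4$.

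Since genus theory forces the $2$-rank of $\Cl(F)$ to be $1$ and Thm \ref{P1} forces $4 \mid h(F)$, the $2$-part of $\Cl(F)$ is cyclic of order at least $4$; hence $L$ is the unique unramified cyclic degree-$4$ extension of $F$, and by class field theory $[\mathfrak L] \in \Cl(F)^4$ iff $\mathfrak L$ splits completely in $L$. Because $p \equiv 1 \bmod 8$ makes $p$ a square in $\Q_2$, $\mathfrak L$ splits in $K/F$ already, so splitting in $L/K$ reduces by base change from $k$ to the primes of $k$ above $2$ splitting in $k(\sqrt{\rho}\,)/k$, i.e., to $\rho$ being hyper-primary (Lemma \ref{Lhp}). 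This completes (2) $\Llra$ (3). I expect the main obstacle to be the class field theory step, in particular verifying that $L/F$ is cyclic rather than of type $(2,2)$, pinning down $L$ as the unique unramified cyclic $4$-extension, and translating the local splitting at $\mathfrak L$ cleanly into the hyper-primary condition on $\rho$; the remaining ramification and decomposition computations are routine once the framework is in place.
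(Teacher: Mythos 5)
Your proof is correct and follows essentially the same route as the paper: the equivalence (1) $\Llra$ (2) is obtained from Hilbert's Second Supplementary Law, and (2) $\Llra$ (3) from identifying $F(\sqrt{\rho}\,)$ as the unique unramified cyclic quartic extension of $F$ and applying the decomposition law in unramified abelian extensions; you merely make explicit the details (cyclicity of $\Gal(L/F)$, uniqueness of $L$, reduction of the splitting at $2$ to the hyper-primary condition) that the paper leaves as ``easily checked''. One small imprecision: the elements of $\cO_p$ supported only above $2$ are generated modulo units by $2$ and a generator $\mu$ of $\fl^{d}$, where $d$ is the order of $[\fl]$ in $\Cl(\Q(\sqrt{p}\,))$ and may be a proper divisor of $h(p)$, rather than by $2$, $\lambda_p$, $\lambda_p'$ as you state; but since $h(p)/d$ is odd one still has $(\mu/\fq)=(\lambda_p/\fq)$, so your reduction of condition (1) of Theorem \ref{T2nd} to the single symbol $(\lambda_p/q)$ goes through unchanged.
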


\begin{proof}
Let $F = \Q(\sqrt{pq}\,)$; then $F_1 = F(\sqrt{p}\,)$ is an
unramified quadratic extensions; since $\rho$ is primary, the 
extension $F(\sqrt{\rho}\,)/F$ is unramified, and it is easily
checked that it is the unique cyclic quartic unramified extension 
of $F$. Since $2$ splits completely in $\Q(\sqrt{p},\sqrt{q}\,)/\Q$,
it will split completely in $F(\sqrt{\rho}\,)/\Q$ if and only
if $2$ splits completely in $\Q(\sqrt{p}, \sqrt{\rho}\,)$, which
happens if and only if $\rho$ is hyperprimary. On the other hand,
the decomposition law in unramified abelian extensions shows that
the prime ideals above $2$ split completely in $F(\sqrt{\rho}\,)/F$ 
if and only if their ideal classes are fourth powers in $\Cl(F)$.
This proves that (2) $\Llra$ (3).

The equivalence (1) $\Llra$ (2) is a special case of the Second 
Supplementary Law of Hilbert's Quadratic Reciprocity Law in 
number fields with odd class number. Here is a direct argument
using class field theory.

Consider the quadratic extension $K = F(\sqrt{\rho}\,)$ of $F$.
Then $\rho$ is hyper-primary if and only if the prime $\fl$
(and, therefore, also its conjugate $\fl'$) above $2$ splits 
in $K/F$. Since $K$ is the unique quadratic subextension of
the ray class field modulo $\fq$ over $F$, which has degree $2h(p)$, 
the prime $\fl$ will split in $K/F$ if and only if
$\fl^{h(p)} = (\lambda_p)$ for some $\lambda_p \equiv \xi^2 \bmod \fq$.
This shows that (1) $\Llra$ (2).
\end{proof}

The symmetry of $p$ and $q$ in the third statement of Thm. \ref{T1}
then implies

\begin{cor}\label{C1}
We have $(\lambda_p/q) = (\lambda_q/p)$.
\end{cor}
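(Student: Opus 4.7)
The plan is to deduce the corollary directly from the symmetry of condition~(3) of Theorem~\ref{T1}. First I would observe that every hypothesis of that theorem is symmetric in $p$ and $q$: the congruence $p \equiv q \equiv 1 \bmod 8$, the relation $(p/q) = +1$ (which coincides with $(q/p)$ by ordinary quadratic reciprocity, since both primes are $\equiv 1 \bmod 4$), and the joint hypothesis $(\eps_p/q) = (\eps_q/p) = +1$ (already symmetric, and in fact guaranteed to be so by Scholz's law).

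Next I would point out that condition~(3) of Theorem~\ref{T1}---that the ideal classes of the primes above~$2$ in $F = \Q(\sqrt{pq}\,)$ are fourth powers in $\Cl(F)$---refers only to the biquadratic datum $F$, which is manifestly invariant under exchanging $p$ and $q$. Consequently, applying Theorem~\ref{T1} as stated gives $(\lambda_p/q) = +1 \Llra \text{(3)}$, while applying it with the roles of $p$ and $q$ swapped gives $(\lambda_q/p) = +1 \Llra \text{(3)}$. Combining these two equivalences yields $(\lambda_p/q) = +1 \Llra (\lambda_q/p) = +1$, and since both residue symbols take values in $\{\pm 1\}$ they must therefore be equal.

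There is essentially no obstacle here, and no additional calculation is required: all the serious work---namely the equivalence between the local symbol condition~(1) and the intrinsic Galois-theoretic condition~(3)---was already carried out in the proof of Theorem~\ref{T1}, via the ray class field of $F$ modulo $\fq$ together with the decomposition law in unramified abelian extensions. The corollary is then purely a formal consequence of the visible symmetry of condition~(3), completely parallel to the way the symmetric part~(3) of Theorem~\ref{P1} yielded Scholz's original reciprocity law $(\eps_p/q) = (\eps_q/p)$.
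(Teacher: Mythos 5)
Your argument is correct and is exactly the paper's own derivation: the corollary is obtained by noting that condition (3) of Theorem \ref{T1} is symmetric in $p$ and $q$, so the two applications of the theorem (with the roles of $p$ and $q$ exchanged) combine to give $(\lambda_p/q) = (\lambda_q/p)$. The paper additionally sketches an independent elementary proof \`a la Brandler via the symbol $[\pi_2/\rho_2]$ in $\Z[\sqrt{2}\,]$, but that is supplementary to, not a replacement for, the symmetry argument you give.
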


While the proof of Thm. \ref{T1} required class field theory, the 
actual reciprocity law in Cor. \ref{C1} can be proved with elementary 
means. We will now give a proof \`a la Brandler \cite{Bra}. To this end, 
write $\lambda_p = \frac{a+b\sqrt{p}}2$; then $a^2 - pb^2 = 2^u$,
where $u = h(p)+2 = 2m+1$ is odd. From $a^2-2^u = pb^2$ we find 
that $a + 2^m\sqrt{2} = \pi_2 \beta^2$ and 
$a - 2^m\sqrt{2} = \pi_2' {\beta'}^2$, where $\pi_2 \pi_2' = p$
for some totally positive $\pi_2 \equiv 1 \bmod 2$. Moreover 
$\beta\beta' = b$ and  $2a = \pi\beta^2 + \pi' {\beta'}^2$. 

Now $(\pi_2 \beta + \beta'\sqrt{p}\,)^2 
     = \pi_2(\pi\beta^2 + \pi_2' {\beta'}^2 + 2y\sqrt{p}\,)
     = 2\pi_2\lambda$. Standard arguments then show that 
$[\frac{\pi_2}{\rho_2}] = (\frac{\lambda}{q})$, where $\rho_2\rho_2' = q$.

The quadratic reciprocity law in $\Z[\sqrt{2}\,]$ shows 
that $[\frac{\pi_2}{\rho_2}] = [\frac{\rho_2}{\pi_2}]$, and his implies
the following elementary form of the supplement to Scholz's 
reciprocity law:
$$ \Big(\frac{\lambda_p}{q}\Big) = 
       \Big[\frac{\pi_2}{\rho_2}\Big] = \Big[\frac{\rho_2}{\pi_2}\Big] =
   \Big(\frac{\lambda_q}{p}\Big). $$

\section{Additional Remarks}
We close this article with a few remarks and questions.

\medskip
\noindent{\bf Remark 1.}
Since $p \equiv q \equiv 1 \bmod 8$, we can also write
$p = N\pi_2^*$ and $q = N\rho_2^*$ for elements 
$\pi_2^*, \rho_2^* \in \Z[\sqrt{-2}\,]$ with 
$\pi_2^* \equiv \rho_2^* \equiv 1 \bmod 2$. Then \cite[Prop. 2]{Lnc}
states that 
$$ \Big[\frac{\pi_2}{\rho_2}\Big]\Big[\frac{\pi_2^*}{\rho_2^*}\Big]
        = \Big(\frac pq\Big)_4^{\phantom{p}} 
          \Big(\frac qp\Big)_4^{\phantom{p}}. $$
Under the assumptions of Thm. \ref{T1}, this means that
$$ \Big(\frac{\lambda_p}{q}\Big) = 
   \Big(\frac{\lambda_q}{p}\Big) = 
   \Big[\frac{\pi_2}{\rho_2}\Big] = \Big[\frac{\pi_2^*}{\rho_2^*}\Big]. $$

\medskip\noindent
{\bf Remark 2.}
Hilbert's Supplementary law as we have stated it applies to all
(quadratic) fields with odd class number, not just the fields
with prime discriminant. Here we give an example that shows what
to expect in this more general situation.

Consider primes $p \equiv q \equiv 3 \bmod 4$ and primes 
$r \equiv 1 \bmod 4$ with $(pq/r) = +1$. Let $\eps_{pq}$ denote 
the fundamental unit in $k = \Q(\sqrt{pq}\,)$. Then the prime
ideals $\fr$ and $\fr'$ above $r$ in $k$ satisfy 
$\fr^{h(pq)} = (\rho)$ for some primary $\rho$
if and only if $(\eps_{pq}/r) = +1$. Since $p\eps_{pq}$ 
is a square in  $k$, we have $(\eps_{pq}/r) = (p/r)$.

Assume now that $\rho$ can be chosen primary, and consider the dihedral 
extension $L/\Q$ with $L = \Q(\sqrt{p}, \sqrt{q}, \sqrt{\rho}\,)$. 
Clearly $\rho$ is primary if and only if $L/\Q(\sqrt{pqr}\,)$ is cyclic 
and unramified. It is then easy to show that the quadratic extensions
of $\Q(\sqrt{r}\,)$ different from $\Q(\sqrt{pq},\sqrt{r}\,)$ can
be generated by a primary element $\alpha$ with prime ideal
factorization $(\fp\fq)^{h(r)}$ for a suitable choice of prime
ideals $\fp$ and $\fq$ above $p$ and $q$, respectively. Note that
if $\fp\fq$ is primary, then $\fp\fq'$ is not, since $\fq\fq'=(q)$
is not primary (we have either $q < 0$ or $q \equiv 3 \bmod 4$).

The upshot of this discussion is: if $\rho$ is primary, then 
exactly one of the ideals $\fp\fq$ and $\fp\fq'$ is primary,
say the first one, and then Hilbert's Supplementary Law 
shows that $(\eps_r/pq) := [\eps_r/\fp\fq] = +1$. Conversely,
if $\fp\fq$ is primary, then $(\eps_r/pq) = (\eps_{pq}/r) = +1$.
We have shown:

\begin{prop}
Let $p \equiv q \equiv 3 \bmod 4$ and $r \equiv 1 \bmod 4$ be primes
with $(pq/r) = +1$. Then the following assertions are equivalent:
\begin{enumerate}
\item $(\eps_{pq}/r) = +1$;
\item $(p/r) = +1$;
\item the ideal $\fr$ in $\Q(\sqrt{pq}\,)$ above $r$ is primary;
\item $h(pqr) \equiv 0 \bmod 4$;
\item there is a unique primary ideal $\fa$ (up to conjugation) of norm $pq$
      in $\Q(\sqrt{r}\,)$, and $(\eps_r/pq) := [\eps_r/\fa] = +1$.
\end{enumerate}
\end{prop}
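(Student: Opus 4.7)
The plan is to establish the five-way equivalence by cycling through the conditions, essentially formalising the sketch already given in Remark 2. The key tools are Hilbert's First Supplementary Law (quoted in the paper) together with a genus-theoretic analysis of the dihedral octic extension $L = \Q(\sqrt{p},\sqrt{q},\sqrt{\rho}\,)$ of $\Q$.

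For (1) $\Llra$ (2), one uses the standard fact that $p\,\eps_{pq} = \alpha^2$ for some $\alpha$ in $k = \Q(\sqrt{pq}\,)$, valid because $p \equiv 3 \bmod 4$ makes the ambiguous prime above $p$ genus-trivial in $k$; comparing quadratic residues modulo $\fr$ then gives $(\eps_{pq}/r) = (p/r)$. The equivalence (1) $\Llra$ (3) is an immediate instance of Hilbert's First Supplementary Law applied to $k$, since the unit group of $k$ is generated by $-1$ and $\eps_{pq}$, so the condition $(\eps/\fr) = +1$ for all units collapses to $(\eps_{pq}/r) = +1$.

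For (3) $\Llra$ (4), I would follow the remark's hint: given primary $\rho$, the extension $k(\sqrt{\rho}\,)/k$ is unramified, and its compositum with the genus field $\Q(\sqrt{p},\sqrt{q}\,)$ realises $L$ as an unramified extension of $F = \Q(\sqrt{pqr}\,)$. A short Galois-theoretic computation shows that $\Gal(L/\Q)$ is dihedral of order $8$, with $\Gal(L/F)$ the unique cyclic subgroup of order $4$. By class field theory, the existence of such an extension of $F$ is equivalent to $4 \mid h(pqr)$; conversely, the unique cyclic unramified quartic extension of $F$ whose Galois closure over $\Q$ is dihedral must be of this shape for a suitable primary $\rho$.

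For (3) $\Llra$ (5), I would instead cut $L$ along $\Q(\sqrt{r}\,)$ and classify the three quadratic subextensions of $L/\Q(\sqrt{r}\,)$: apart from the genus-theoretic $\Q(\sqrt{r},\sqrt{pq}\,)$, the remaining two have the form $\Q(\sqrt{r},\sqrt{\alpha}\,)$ with $(\alpha) = (\fp\fq)^{h(r)}$ for appropriate primes $\fp,\fq$ above $p,q$ in $\Q(\sqrt{r}\,)$. Since $p \equiv q \equiv 3 \bmod 4$, neither $(p) = \fp\fp'$ nor $(q) = \fq\fq'$ is generated by a primary element, so at most one of $\fp\fq$ and $\fp\fq'$ is primary, giving the uniqueness asserted in (5); Hilbert's First Supplementary Law in $\Q(\sqrt{r}\,)$ then yields $(\eps_r/pq) = +1$, while the reverse direction recovers a primary $\rho$ from a primary $\fp\fq$. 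The hardest step will be this dihedral descent: one must carefully track the primary conditions (total positivity and square class modulo $4$) as they pass between generators of $\fr^{h(pq)}$ in $k$ and generators of $(\fp\fq)^{h(r)}$ in $\Q(\sqrt{r}\,)$ via the common field $L$, handling the archimedean and $2$-adic data in the biquadratic field $\Q(\sqrt{p},\sqrt{q}\,)$ with care.
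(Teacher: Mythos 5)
Your overall route --- Hilbert's First Supplementary Law for (1) $\Llra$ (3) and for producing a primary generator $\rho$ of $\fr^{h(pq)}$, the identity $p\eps_{pq}\in k^{\times 2}$ for (1) $\Llra$ (2), and a dihedral octic field for (4) and (5) --- is the same as the paper's (Remark 2). But one step as you state it fails: the extension $k(\sqrt{\rho}\,)/k$ is \emph{not} unramified. Primariness of $\rho$ only controls the places above $2\infty$; since $(\rho)=\fr^{h(pq)}$ with $h(pq)$ odd, the prime $\fr$ ramifies in $k(\sqrt{\rho}\,)/k$. Relatedly, the compositum $\Q(\sqrt{p},\sqrt{q},\sqrt{\rho}\,)$ does not contain $\sqrt{r}$ (none of $r/p$, $r/\rho$, $r/p\rho$ is a square in $k$, by looking at valuations at $\fr$ and $\fr'$), so it is not an extension of $F=\Q(\sqrt{pqr}\,)$ at all, and it is not Galois over $\Q$. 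The field you actually need is $M=\Q(\sqrt{pq},\sqrt{r},\sqrt{\rho}\,)$, the Galois closure of $\Q(\sqrt{\rho}\,)/\Q$ (note $\sqrt{\rho}\,\sqrt{\rho'}=r^{(h(pq)-1)/2}\sqrt{r}$ once $\rho$ is totally positive). Over $\Q(\sqrt{pq},\sqrt{r}\,)$ the ideal $\fr$ becomes a square, the odd valuation of $\rho$ at $\fr$ is absorbed, and $M/F$ is a cyclic quartic extension unramified everywhere, including at infinity. (The paper's remark also writes $L=\Q(\sqrt{p},\sqrt{q},\sqrt{\rho}\,)$, so you have inherited a slip there; but your added assertion that $k(\sqrt{\rho}\,)/k$ is unramified is where the argument genuinely breaks.)

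Two further points need nailing down. For (3) $\Llra$ (4), "class field theory" alone does not give the converse: a priori $4\mid h(pqr)$ could come from a $(2,2)$-quotient of $\Cl(F)$ rather than a cyclic quartic one. Genus theory rescues this: the narrow genus field of $F$ is $\Q(\sqrt{-p},\sqrt{-q},\sqrt{r}\,)$, whose only totally real quadratic subextension over $F$ is $\Q(\sqrt{pq},\sqrt{r}\,)$, so the wide $2$-class group $\Cl_2(F)$ is cyclic and $4\mid h(pqr)$ is indeed equivalent to the existence of a cyclic quartic unramified extension, which one then identifies with $M$ for a suitable primary $\rho$. Finally, in (1) $\Llra$ (3) you should record that $(-1/\fr)=+1$ because $N\fr=r\equiv 1\bmod 4$, so that the primariness of the ideal $\fr$ really does reduce to the single condition $(\eps_{pq}/r)=+1$. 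With these repairs your argument coincides with the paper's.
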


Note that $(\eps_r/pq)$ is not well defined if $(p/r) = -1$ since
in this case we do not have a canonical way to single out 
the prime ideals above $p$ and $q$ in  $\Q(\sqrt{r}\,)$.

As an example, consider the case $p=3$, $q=7$, $r = 37$; then the 
elements of norm $21$ in the ring of integers in $\Q(\sqrt{37}\,)$
are $\pm 13 \pm 2\sqrt{37}$ (these elements are not primary:
the element $-13 + 2\sqrt{37} \equiv 1 \bmod 4$ is not totally 
positive) and $\frac{\pm 11 \pm \sqrt{37}}2$. It is easy to 
check that $\beta = \frac{11 + \sqrt{37}}2$ is primary; now
$\eps_r = 6 + \sqrt{37}$, and 
$[\eps_r/\beta] = (\frac{-5}{21}) = +1$ as claimed, whereas
$[\eps_r/(13 \pm 2 \sqrt{37}\,] = -1$.

\medskip\noindent
{\bf Remark 3.} Above we have seen that, under suitable assumptions,
the ideal class generated by a prime above $2$ in $\Q(\sqrt{pq}\,)$ 
is a fourth power in the class group if and only if 
$[\frac{\pi_2}{\rho_2}] = +1$, where $\pi_2, \rho_2 \in \Z[\sqrt{2}\,]$
are elements $\equiv 1 \bmod 2$ with norms $p$ and $q$, respectively.
Does an analogous statement hold with $2$ replaced by an odd
prime $\ell \ne p, q$?

\medskip\noindent
{\bf Remark 4.} 
Budden, Eisenmenger \& Kish \cite{BEK} have generalized 
Scholz's reciprocity law to higher powers; can the reciprocity law 
$(\lambda_p/q) = (\lambda_q/p)$ proved above also be generalized
in this direction?

\end{document}